\documentclass[11pt]{article}

\usepackage{amsmath}
\usepackage{amsfonts}
\usepackage{amssymb}
\usepackage[hidelinks]{hyperref}
\newtheorem{theorem}{\bf Theorem}[section]
\newtheorem{lemma}[theorem]{\bf Lemma}
\newtheorem{proposition}[theorem]{\bf Proposition}
\newtheorem{corollary}[theorem]{\bf Corollary}

\newenvironment{proof}{\noindent{\em Proof:}}{\quad \hfill$\Box$\vspace{2ex}}
\newtheorem{remark}[theorem]{\bf Remark}

\numberwithin{equation}{section}

\def \e {{\,e}}
\def \dd {{\,d}}
\newcommand{\supp}{\operatorname{supp}}

\def \qquad {\quad\quad}

\makeatletter

\newcommand{\Rmnum}[1]{\expandafter\@slowromancap\romannumeral #1@}
\makeatother

\title{Exact-Support Counterexamples to Euclidean-to-Spherical Transfer of Positive Definiteness in Even Dimensions}
\author{Wentao Huang\thanks{Corresponding author. Department of Mathematics, The University of Hong Kong, Hong Kong, P.R. China. E-mail address: \textit{huangwt@hku.hk}.}
\and Haizhang Zhang\thanks{School of Mathematics, Sun Yat-sen University, Guangzhou, P.R. China. E-mail address: \textit{zhhaizh2@sysu.edu.cn}. Supported in part by the National Natural Science Foundation of China (Grant Nos. 12671642 and 12371103) and the Guangdong Basic and Applied Basic Research Foundation (Grant No. 2024A1515011194).}}
\date{}
\hypersetup{
    pdftitle={Exact-Support Counterexamples to Euclidean-to-Spherical Transfer of Positive Definiteness in Even Dimensions},
    pdfauthor={Wentao Huang and Haizhang Zhang},
    pdfsubject={Radial positive-definite functions on Euclidean spaces and spheres},
    pdfkeywords={positive definite function; sphere; compact support; radial Fourier transform; Gegenbauer polynomial}
}

\begin{document}

    \maketitle
	\begin{abstract}
       For every odd integer \(d\geq3\), a continuous function \(\varphi\colon[0,\infty)\to\mathbb R\) supported in \([0,\pi]\) and isotropic positive definite on \(\mathbb R^d\) remains so on \(\mathbb S^d\). In even dimensions, recent work shows that this transfer fails under every prescribed positive upper bound on the support. We prove an exact-support refinement with a construction uniform in the prescribed radius. More precisely, for each \(d=2m\geq2\) and \(R\in(0,\pi]\), we construct a function \(\varphi\) whose radial extension belongs to \(C_c^\infty(\mathbb R^d)\) and has support radius exactly \(R\), such that \(\varphi(\|\mathbf{x}-\mathbf{y}\|_2)\) is strictly positive definite on \(\mathbb R^d\), whereas \(\varphi(\rho(\mathbf{x},\mathbf{y}))\) is not positive definite on \(\mathbb S^d\). Thus every admissible support radius is attained by a smooth, strictly Euclidean positive-definite counterexample.
		
	\end{abstract}
	
	\noindent{\bf Keywords:} positive definite function; sphere; compact support; radial Fourier transform; Gegenbauer polynomial
	
	\noindent{\bf Mathematics Subject Classification 2020:} 42A82, 33C45, 42C10.

    \section{Introduction}\label{sec:introduction}

    A natural way to construct an isotropic kernel on a sphere is to start with a radial positive-definite function on a Euclidean space. Compactly supported radial kernels are especially useful in approximation and spatial statistics, where localization produces sparse interpolation and covariance matrices \cite{Gneiting2002,Wendland1995}. There are, however, two different ways to transfer such a function to a sphere. Since
    \[
     \|\mathbf{x}-\mathbf{y}\|_2
     =2\sin\!\left(\frac{\rho(\mathbf{x},\mathbf{y})}{2}\right),
     \qquad \mathbf{x},\mathbf{y}\in\mathbb S^d,
    \]
    restricting a radial kernel on \(\mathbb R^{d+1}\) to the embedded sphere automatically gives a positive-definite kernel with radial function \(r\mapsto\varphi(2\sin(r/2))\). Direct transfer keeps the same function \(\varphi\) and simply replaces Euclidean distance by geodesic distance. This second operation has no general positivity principle behind it, and the distinction matters in covariance modeling on global domains \cite{Gneiting2013,HuangZhangRobeson2011}.
    
    More precisely, let
    \[
     \mathbb S^d
     =\{\mathbf{x}\in\mathbb R^{d+1}:\|\mathbf{x}\|_2=1\},
     \qquad
     \rho(\mathbf{x},\mathbf{y})
     =\arccos(\mathbf{x}\cdot\mathbf{y}),
    \]
    and let \(\Phi_d\) be the class of continuous functions \(\varphi\colon[0,\infty)\to\mathbb R\) such that \(\varphi(\|\mathbf{x}-\mathbf{y}\|_2)\) is positive definite on \(\mathbb R^d\). Similarly, let \(\Psi_d\) be the class of continuous functions \(\psi\colon[0,\pi]\to\mathbb R\) such that \(\psi(\rho(\mathbf{x},\mathbf{y}))\) is positive definite on \(\mathbb S^d\). Bochner's theorem describes \(\Phi_d\) through nonnegative Euclidean spectral measures \cite{Bochner1932,Bochner1933}, while Schoenberg's theorem describes \(\Psi_d\) through nonnegative Gegenbauer coefficients \cite{Schoenberg1942}. Direct transfer therefore asks whether the first spectral positivity condition forces the second under the support condition \(\supp \varphi \subseteq [0,\pi]\), where \(\pi\) is the maximal geodesic distance on \(\mathbb S^{d}\). We write this property as
    \begin{equation}\label{eq:transfer-question}
     (\mathcal T_d)\qquad
     \varphi\in\Phi_d,\quad
     \supp\varphi\subseteq [0,\pi]
     \quad\Longrightarrow\quad
     \varphi|_{[0,\pi]}\in\Psi_d.
    \end{equation}
    The first compact-support transfer theorem was established for radial functions positive definite on \(\mathbb R^{3}\): under the support condition, every such function induces a positive-definite kernel on \(\mathbb S^{j}\) for \(j=1,2,3\) \cite{Gneiting2013}. An extension to all odd dimensions was stated in \cite{NieMa2019}; Feng and Ge subsequently established the theorem by a different method \cite{FengGe2022}. A further refinement proved positivity of every resulting spherical spectral coefficient, hence strict positive definiteness \cite{Lu2025}. In even dimensions, P\'olya-type criteria and positivity results for Jacobi integrals produce important compactly supported families on spheres \cite{BeatsonCastellXu2014,FengGe2022,Lu2025,Xu2018}. These results impose additional structure on the function and do not settle \((\mathcal T_d)\) for the full class \(\Phi_d\).
    
    After obtaining the main result independently, the authors learned of Ge's contemporaneous work \cite{Ge2026}. Ge constructs smooth, strictly Euclidean positive-definite counterexamples below any prescribed support bound. Both proofs use the degree-two Fourier window and polynomial localization, but implement them differently. Ge derives a global window formula and uses polynomial approximation to concentrate Fourier mass in its negative region. The present proof instead uses compact frequency support, integrability, and the negative quadratic expansion at the origin. It applies the explicit filter \((I+\Delta/(d+1)^2)^M\) to a compactly supported radial bump. Real analyticity shows that filtering preserves the bump's support, and the Titchmarsh--Lions theorem gives exact support after the final convolution. Consequently, the present paper gives a complementary exact-support refinement: every prescribed radius \(R\in(0,\pi]\) is attained as the actual support radius, rather than serving only as an upper bound. For fixed even \(d\), the same sufficiently large \(M\) works for all \(R\).

    For a nonzero compactly supported function on \([0,\infty)\), we call \(\max\supp\varphi\) its support radius, and we write \(B_R=\{\mathbf{x}\in\mathbb R^d:\|\mathbf{x}\|_2<R\}\).
    
    \begin{theorem}[Counterexamples at every support radius]
     \label{thm:main}
     Let \(d=2m\), where \(m\geq1\). For every prescribed \(R\in(0,\pi]\), there is a function \(\varphi\colon[0,\infty)\to\mathbb R\) with the following properties:
     \begin{enumerate}
      \item the radial extension \(\varphi(\|\,\cdot\,\|_2)\) belongs to \(C_c^\infty(\mathbb R^d)\), and the support radius of \(\varphi\) is exactly \(R\);
      \item \(\varphi(\|\mathbf{x}-\mathbf{y}\|_2)\) is strictly positive definite on \(\mathbb R^d\);
      \item \(\varphi(\rho(\mathbf{x},\mathbf{y}))\) is not positive definite on \(\mathbb S^d\).
     \end{enumerate}
     Moreover, one may take
     \[
      \varphi(r)=(q*q)(r\mathbf{e}_1),\qquad \mathbf{e}_1=(1,0,\ldots,0)^{T},
     \]
     for a nonzero real radial \(q\in C_c^\infty(\mathbb R^d)\) satisfying \(\supp q=\overline B_{R/2}\), and the degree-two Gegenbauer coefficient of \(\varphi\) is strictly negative.
    \end{theorem}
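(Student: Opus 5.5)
The construction will be \(\varphi(r)=(q*q)(r\mathbf e_1)\) with \(q\) real and radial, so that \(\widehat{\varphi(\|\cdot\|_2)}=\hat q^{\,2}\ge0\). The plan has five steps: build \(q\) by filtering a bump, compute the degree-two Gegenbauer coefficient through a Fourier window, force that coefficient to be negative, and then verify strict positive definiteness and exact support.

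\emph{Construction of \(q\).} I fix a real radial \(\eta\in C_c^\infty(\mathbb R^d)\) with \(\supp\eta=\overline B_{R/2}\). For concreteness, \(\eta(\mathbf x)=\exp(-1/(R^2/4-\|\mathbf x\|^2))\) inside the ball and \(0\) outside; this choice is real analytic on the open ball. Set
\[
q_M=\bigl(I+\Delta/(d+1)^2\bigr)^M\eta,\qquad
\hat q_M(\xi)=\bigl(1-\|\xi\|^2/(d+1)^2\bigr)^M\hat\eta(\xi).
\]
Differential operators do not enlarge supports, so \(\supp q_M\subseteq\overline B_{R/2}\). The function \(q_M\) is real analytic on \(B_{R/2}\). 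If it vanished on an open subset of the ball, it would vanish on the whole ball. Then \(\hat q_M\equiv0\), which forces \(\hat\eta\equiv0\) off a null set, a contradiction. Hence \(q_M\) does not vanish on any open subset of the ball, and therefore \(\supp q_M=\overline B_{R/2}\).

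\emph{Exact support and strict positive definiteness.} By the Titchmarsh--Lions theorem, the convex hull of \(\supp(q*q)\) equals \(\overline B_{R/2}+\overline B_{R/2}=\overline B_R\). Since \(q*q\) is radial, its support radius is exactly \(R\), and \(q*q\in C_c^\infty\). Its Fourier transform \(\hat q^{\,2}\) is nonnegative and, being real analytic and nonzero, vanishes only on a null set. A radial function whose Fourier transform is nonnegative and not a.e.\ zero is strictly positive definite on \(\mathbb R^d\) by the standard quadratic-form argument: \(\sum c_jc_k\varphi(\|x_j-x_k\|)=\int|\sum c_je^{i x_j\xi}|^2\hat\varphi\), and an exponential sum with distinct nodes vanishes only on a null set.

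\emph{The degree-two coefficient.} Because \(R\le\pi\), \(\varphi\) vanishes beyond \(\pi\). The spherical coefficient is then
\[
a_2\propto\int_0^\pi\varphi(r)\,C_2^{(d-1)/2}(\cos r)\sin^{d-1}r\,dr .
\]
I rewrite it as a Euclidean integral:
\[
a_2=\int_{\mathbb R^d}\varphi(\|x\|)\,w(\|x\|)\,dx,\qquad
w(r)=C_2^{(d-1)/2}(\cos r)\Bigl(\tfrac{\sin r}{r}\Bigr)^{d-1}\mathbf 1_{[0,\pi]}(r).
\]
Plancherel then gives \(a_2=c\int\hat q(\xi)^2\,\hat w(\xi)\,d\xi\). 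The function \(w\) is bounded and compactly supported, so \(\hat w\) is continuous and bounded. Near the origin, \(\hat w(\xi)=\hat w(0)-\kappa\|\xi\|^2+O(\|\xi\|^4)\).

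The key computation, which I expect to be the main obstacle, is to show \(\hat w(0)=\int w<0\). Equivalently, I need \(\int_0^\pi C_2^{\lambda}(\cos r)\sin^{d-1}r\,(\tfrac{\sin r}{r})^{d-1}r^{d-1}\,dr\)-type quantities with the correct measure to come out negative. Since \(C_2^\lambda(\cos r)\) integrated against \(\sin^{d-1}r\) is exactly zero by orthogonality, the sign comes from the discrepancy weight \((r/\sin r)^{d-1}\) versus \(1\). In even \(d\) the weight \(r^{d-1}\) against \(C_2^\lambda\) should give a negative value, and I would check it by explicit integration. The problem is even-dimensional, which is where the odd-dimensional positivity fails, and that is exactly the source of the sign. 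If \(\hat w(0)\) turns out to be zero, I would instead use the negative quadratic coefficient \(-\kappa\) and concentrate near \(0\) to second order.

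\emph{Forcing \(a_2<0\).} As \(M\to\infty\), the factor \((1-\|\xi\|^2/(d+1)^2)^{2M}\) concentrates the mass of \(\hat q_M^{\,2}\) near \(\xi=0\) and on the region \(\|\xi\|>\sqrt2(d+1)\). The outer region must be controlled by the decay of \(\hat\eta\) together with the bound \(|\hat w|\le\|w\|_1\). The cleanest way is to normalize and compare the inner mass with the outer tail. This gives \(\int\hat q_M^2\hat w\approx\hat w(0)\int\hat q_M^2<0\) for large \(M\), uniformly in \(R\) after scaling \(\hat\eta\). This is the delicate step, and is where the specific constant \((d+1)^2\) would be tuned.
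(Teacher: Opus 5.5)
Your construction of \(q\), the analyticity argument for \(\supp q=\overline B_{R/2}\), the Titchmarsh--Lions step, and the strict positive definiteness argument are all fine and coincide with the paper. The trouble starts with the window. First, \(\hat w(0)=0\) exactly. In polar coordinates \((\sin r/r)^{d-1}r^{d-1}=(\sin r)^{d-1}\), so \(\int_{\mathbb R^d}w=\omega_{d-1}\int_0^\pi C_2^\lambda(\cos r)(\sin r)^{d-1}\,dr=0\) by orthogonality; there is no ``discrepancy weight''. Your fallback is the right quantity. Since \(w\) has compact support, \(\hat w\) is entire and \(\hat w(\xi)=-\kappa\|\xi\|^2+O(\|\xi\|^4)\), with \(\kappa=(2\pi)^{-d/2}\omega_{d-1}\mathcal J_d/(2d)\) and \(\mathcal J_d=\int_0^\pi r^2C_2^\lambda(\cos r)(\sin r)^{d-1}\,dr\). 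The Gegenbauer equation plus two integrations by parts give \(\mathcal J_d=\frac{4\lambda(\lambda+1)}{(d+1)^2}\int_0^\pi(\sin r)^{d+1}\,dr>0\). But this holds for \emph{every} \(d\ge2\), odd or even, so it is not where even dimension enters.

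Second, and this is the genuine gap, the forcing step fails. The multiplier \((1-\|\xi\|^2/(d+1)^2)^{2M}\) grows geometrically on \(\|\xi\|>\sqrt2(d+1)\). Since \(\hat\eta\) is nonzero and real analytic, it is nonzero on a set of positive measure in \(\{\|\xi\|>\rho_0\}\) for any \(\rho_0>\sqrt2(d+1)\). Hence \(\int_{\|\xi\|>\sqrt2(d+1)}\hat q_M^{\,2}\,d\xi\) grows at least geometrically in \(M\), whatever bump you take. So the mass of \(\hat q_M^{\,2}\) does not concentrate at the origin. The bound \(|\hat w|\le\|w\|_1\) then gives an outer error that swamps the inner contribution, which is only of order \(-M^{-(d+2)/2}\).

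No rearrangement of these estimates can work. Every property of \(w\) you use (bounded, continuous, vanishing at \(0\) with a negative quadratic term) also holds in odd dimensions, where direct transfer is a theorem. The missing idea is to replace \(w\) by the untruncated window \(W_d\). It agrees with \(w\) on \(\overline B_\pi\supseteq\supp(q*q)\), so it gives the same pairing. \(W_d\) extends to an entire function of exponential type \(2+(d-1)=d+1\), so by Paley--Wiener--Schwartz \(\widehat W_d\) is supported in \(\overline B_{d+1}\). It is also integrable (via \(L^2\) for \(d\ge4\), and explicitly for \(d=2\)). This is what the constant \((d+1)^2\) is tuned to: on the whole frequency support \(0\le p_M(\|\xi\|^2)\le1\), and \(p_M(\|\xi\|^2)\le\varrho_d^M<1\) off a neighborhood of the origin. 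The tail is therefore \(O(\varrho_d^{2M})=o(M^{-(d+2)/2})\).

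The price is that the low-frequency sign of \(\widehat W_d\) is no longer a moment computation. The paper obtains it by Abel summation, using the anti-periodicity \(G_d(r+\pi)=-G_d(r)\), which is valid only for even \(d\). This gives \(F(s)=A_d(s)/(2\cos(\pi s/2))\) and hence \(w_d(k)=-\gamma_dk^2+O(k^4)\), with the same \(\mathcal J_d>0\). For odd \(d\) the analogous Abel sum vanishes near the origin, and that is exactly where parity enters.
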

    
    Theorem~\ref{thm:main} strengthens the known failure of \((\mathcal T_d)\) by prescribing the support exactly: the radius \(R\) is attained, rather than merely serving as an upper bound. The construction is also uniform in the prescribed radius: in each fixed even dimension, the exponent of the polynomial filter can be chosen independently of \(R\); see Lemma~\ref{lem:negative-energy}. Thus counterexamples occur at every radius not exceeding \(\pi\), including the endpoint, while retaining smoothness and strict Euclidean positive definiteness. This failure cannot be attributed to behavior near the maximal geodesic distance \(\pi\), insufficient regularity, or degeneracy of the Euclidean kernel. It is detected by the degree-two Gegenbauer coefficient, the first nonconstant even spherical mode. Together with the odd-dimensional theorem, Theorem~\ref{thm:main} recovers the dimension classification of direct transfer for \(d\geq2\) with exact control of the support.
    
    The proof separates into a spherical calculation and a support-preserving localization argument. For a self-convolution \(f=q*q\), the degree-two spherical coefficient becomes a Fourier pairing with the transform of
    \[
     W_d(\mathbf{x})
     =C_2^{(d-1)/2}(\cos\|\mathbf{x}\|_2)
      \left(\frac{\sin\|\mathbf{x}\|_2}{\|\mathbf{x}\|_2}\right)^{d-1},
    \]
    where \(C_2^{(d-1)/2}\) denotes the degree-two Gegenbauer polynomial with parameter \((d-1)/2\). The Fourier transform \(\widehat W_d\) is integrable and supported in the ball of radius \(d+1\), and near the origin its radial representation satisfies
    \[
     w_{2m}(k)=-\gamma_{2m}k^2+O(k^4),
     \qquad
     \gamma_{2m}=\frac{2m-1}{2(2m+1)(2m+1)!!}>0.
    \]
    Abel regularization and the anti-periodicity of the window reduce the sign of the quadratic term to a positive one-dimensional integral. A high power of \(I+\Delta/(d+1)^2\), applied to a bump of prescribed radius, then concentrates the relevant Fourier energy near the origin without enlarging the support. Self-convolution turns the filtered bump into a strictly positive-definite function and doubles its support radius, completing the construction. In each fixed dimension, the same power threshold works for all prescribed radii. This localization step uses only the compact support and integrability of \(\widehat W_d\), together with its negative leading term at the origin. Once these properties have been established, no further details of the Gegenbauer calculation enter the argument.

    The remainder of the paper is organized as follows. Section~\ref{sec:window} expresses the degree-two spherical coefficient as a Fourier pairing. Section~\ref{sec:negative-window} identifies the corresponding compactly supported Fourier window and establishes its negative low-frequency expansion. Section~\ref{sec:construction} develops the support-preserving Fourier localization argument and completes the proof of Theorem~\ref{thm:main}.

    \section{The degree-two coefficient as a Fourier pairing}
    \label{sec:window}
    
    Throughout this paper, we use the unitary Fourier transform
    \begin{equation}\label{eq:fourier-convention}
     \widehat h(\boldsymbol{\xi})
     =(2\pi)^{-d/2}\int_{\mathbb R^d}
       \e^{-i\mathbf{x}\cdot\boldsymbol{\xi}}h(\mathbf{x})\dd\mathbf{x},
     \qquad
     h(\mathbf{x})
     =(2\pi)^{-d/2}\int_{\mathbb R^d}
       \e^{i\mathbf{x}\cdot\boldsymbol{\xi}}
       \widehat h(\boldsymbol{\xi})\dd\boldsymbol{\xi}.
    \end{equation}
    For a function \(q\) on \(\mathbb R^d\), set \(\widetilde q(\mathbf{x})=\overline{q(-\mathbf{x})}\). Then
    \begin{equation}\label{eq:autocorrelation}
     \widehat{q*\widetilde q}(\boldsymbol{\xi})
     =(2\pi)^{d/2}|\widehat q(\boldsymbol{\xi})|^2.
    \end{equation}
    
    For the rest of the paper, set
    \begin{equation}\label{eq:parameters}
     d=2m,\qquad
     \nu=\frac d2-1=m-1,\qquad
     \lambda=\frac{d-1}{2}.
    \end{equation}
    By Schoenberg's theorem, a continuous isotropic function on \(\mathbb S^d\) is positive definite precisely when all its Gegenbauer coefficients are nonnegative \cite{Schoenberg1942}. Their normalizing constants are positive, so we use the unnormalized coefficients
    \begin{equation}\label{eq:gegenbauer-coefficient}
     B_{\ell,d}(\psi)
     =\int_0^\pi
       \psi(r)C_\ell^\lambda(\cos r)(\sin r)^{d-1}\dd r.
    \end{equation}
    Thus \(B_{\ell,d}(\psi)<0\) for one \(\ell\) implies \(\psi\notin\Psi_d\).
    
    We shall detect failure using only \(\ell=2\). Define the radial function
    \begin{equation}\label{eq:window}
     W_d(\mathbf{x})
     =C_2^\lambda(\cos\|\mathbf{x}\|_2)
      \left(
       \frac{\sin\|\mathbf{x}\|_2}{\|\mathbf{x}\|_2}
      \right)^{d-1},
     \qquad \mathbf{x}\in\mathbb R^d,
    \end{equation}
    where the quotient has its continuous value at the origin. Let \(\omega_{d-1}=|\mathbb S^{d-1}|\) denote the surface area of the unit sphere in \(\mathbb R^d\).
    
    \begin{lemma}[Fourier representation of the degree-two coefficient]\label{lem:window}
    Let \(q\in C_c^\infty(\mathbb R^d)\) be real and radial, \(\supp q\subseteq\overline B_{R/2}\), and \(R\leq\pi\). If \(f=q*\widetilde q=q*q\) and \(\varphi(r)=f(r\mathbf{e}_1)\), then
    \begin{equation}\label{eq:window-identity}
     \omega_{d-1}B_{2,d}(\varphi)
     =\int_{\mathbb R^d}f(\mathbf{x})W_d(\mathbf{x})\dd\mathbf{x}
     =(2\pi)^{d/2}
      \left\langle\widehat W_d,|\widehat q|^2\right\rangle.
    \end{equation}
    The Fourier transform \(\widehat W_d\) is initially understood as a tempered distribution. The pairing is well defined because \(|\widehat q|^2\) is a Schwartz function.
    \end{lemma}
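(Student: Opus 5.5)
The identity has two equalities, and we handle them separately. The first comes from integrating in polar coordinates. The second is Parseval's identity in the distributional sense.

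\emph{First equality.} Since \(q\) is real and radial, \(\widetilde q(\mathbf{x})=q(-\mathbf{x})=q(\mathbf{x})\), so \(f=q*\widetilde q=q*q\). The function \(f\) is radial, lies in \(C_c^\infty(\mathbb R^d)\), and satisfies \(\supp f\subseteq \supp q+\supp q\subseteq\overline B_R\subseteq\overline B_\pi\). Hence \(\varphi(r)=f(r\mathbf{e}_1)\) vanishes for \(r\geq\pi\), apart from possibly the single point \(r=\pi\) when \(R=\pi\). That point is harmless: for \(R=\pi\) we still have \(\varphi(\pi)=0\), because \(f\) is continuous and vanishes outside \(\overline B_\pi\).

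The integrand \(fW_d\) is radial, so polar coordinates give
\[
\int_{\mathbb R^d} f W_d\dd\mathbf{x}
=\omega_{d-1}\int_0^\infty \varphi(r)\,C_2^\lambda(\cos r)\Big(\frac{\sin r}{r}\Big)^{d-1} r^{d-1}\dd r .
\]
The factor \(r^{d-1}\) cancels against the denominator, which leaves \((\sin r)^{d-1}\). Because \(\varphi\) is supported in \([0,\pi]\), the integral truncates to \([0,\pi]\). The result is exactly \(\omega_{d-1}B_{2,d}(\varphi)\), by \eqref{eq:gegenbauer-coefficient}. This step is routine bookkeeping.

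\emph{Second equality.} The window \(W_d\) is bounded and smooth. It is smooth because \(\sin t/t\) and \(\cos t\) are even entire functions of \(t\), hence smooth functions of \(\|\mathbf{x}\|_2^2\). It is also polynomially bounded, so it is a tempered distribution. The function \(f\) is in \(C_c^\infty\subseteq\mathcal S\), and therefore
\[
\int fW_d=\langle W_d,f\rangle .
\]
Next we apply the defining relation of the distributional Fourier transform in Parseval form, \(\langle W_d,f\rangle=\langle \widehat W_d,\check f\,\rangle\), where \(\check f\) is the inverse transform. More carefully, for real test functions we use \(\langle \widehat T,\phi\rangle=\langle T,\widehat\phi\rangle\) together with \(f=\widehat{g}\), where \(g(\boldsymbol\xi)=\check f(\boldsymbol\xi)=\widehat f(-\boldsymbol\xi)\). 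Since \(f\) is radial and even, \(\widehat f\) is even and real, so \(g=\widehat f\).

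By \eqref{eq:autocorrelation}, \(\widehat f=(2\pi)^{d/2}|\widehat q|^2\). Combining these facts gives
\[
\int fW_d=\langle W_d,\widehat{\widehat f}\,\rangle=\langle \widehat W_d,\widehat f\rangle=(2\pi)^{d/2}\langle\widehat W_d,|\widehat q|^2\rangle .
\]
Here \(\widehat{\widehat f}(\mathbf{x})=f(-\mathbf{x})=f(\mathbf{x})\) by evenness. The pairing is legitimate because \(\widehat q\in\mathcal S\), so \(|\widehat q|^2\in\mathcal S\).

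\emph{Main obstacle.} The only delicate point is the convention for the pairing. We must keep \(\langle\cdot,\cdot\rangle\) bilinear rather than sesquilinear, and keep track of the reflection \(\mathbf{x}\mapsto-\mathbf{x}\) in the unitary transform. Both issues disappear because every function involved is real and even. I would state this explicitly, so that the formula remains valid later when \(\widehat W_d\) is identified as an integrable compactly supported function and the pairing becomes an ordinary integral.
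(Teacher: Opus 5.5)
Your proposal is correct and is essentially the paper's proof. Polar coordinates (cancelling \(r^{d-1}\) and truncating to \([0,\pi]\) via \(\supp f\subseteq\overline B_R\)) give the first equality, and the defining relation \(\langle\widehat T,\phi\rangle=\langle T,\widehat\phi\rangle\), applied with \(f=\widehat g\) and \(g=\widehat f(-\,\cdot)\) together with \eqref{eq:autocorrelation}, gives the second; your bookkeeping of the reflection and the bilinear pairing just unpacks the step the paper writes as \(\langle\widehat W_d,\widehat f(-\,\cdot)\rangle\).
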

    
    \begin{proof}
    Since \(\supp f\subseteq\overline B_{R}\) and \(R\le \pi\), polar coordinates and \eqref{eq:window} give
    \[
        \begin{aligned}
            \int_{\mathbb R^d}f(\mathbf{x})W_d(\mathbf{x})\dd\mathbf{x}
            &=\omega_{d-1}\int_0^\infty f(r\mathbf{e}_1)C_2^\lambda(\cos r)(\sin r)^{d-1}\dd r\\
            &=\omega_{d-1}\int_0^\pi\varphi(r)C_2^\lambda(\cos r)(\sin r)^{d-1}\dd r\\
            &=\omega_{d-1}B_{2,d}(\varphi).
        \end{aligned}
    \]
    The definition of the Fourier transform on tempered distributions and \eqref{eq:autocorrelation} yield
    \[
     \int_{\mathbb R^d}f(\mathbf{x})W_d(\mathbf{x}) \dd\mathbf{x}
     =\left\langle\widehat W_d,\widehat f(-\,\cdot)\right\rangle
     =(2\pi)^{d/2}
      \left\langle\widehat W_d,|\widehat q|^2\right\rangle.
    \]
    \end{proof}

    \section{A negative low-frequency window in even dimensions}
    \label{sec:negative-window}
    
    We now identify the Fourier transform of \(W_d\). Its compact frequency support will make localization possible, while its sign near the origin will force the degree-two coefficient to be negative. The function \(W_d\) is not absolutely integrable at infinity, so its radial Fourier formula cannot initially be interpreted as an ordinary integral. We first make precise the Abel regularization used near the origin and identify its locally uniform limit with the distributional Fourier transform.

    Set
    \begin{equation}\label{eq:Gd}
     y_d(r)=C_2^\lambda(\cos r),
     \qquad
     G_d(r)=y_d(r)(\sin r)^{d-1}.
    \end{equation}

    \begin{lemma}[Abel identification near the origin]
    \label{lem:abel-identification}
    For \(\varepsilon>0\), let
    \[
     W_{d,\varepsilon}(\mathbf{x})
     =\e^{-\varepsilon\|\mathbf{x}\|_2}W_d(\mathbf{x}),
     \qquad
     F_\varepsilon(s)
     =\int_0^\infty \e^{-\varepsilon r}G_d(r)\cos(sr)\dd r.
    \]
    Then \(W_{d,\varepsilon}\in L^1(\mathbb R^d)\) and \(W_{d,\varepsilon}\to W_d\) in \(\mathcal S'(\mathbb R^d)\). Moreover, for every \(0<K<1\),
    \[
     \widehat W_{d,\varepsilon}(\boldsymbol{\xi})
     \longrightarrow w_d(\|\boldsymbol{\xi}\|_2)
     \quad\text{uniformly for }\boldsymbol{\xi}\in\overline B_K,
    \]
    where
    \begin{align}
     w_d(k)
     &=c_\nu\int_{-1}^{1}(1-t^2)^{\nu-1/2}F(kt)\dd t,
        \label{eq:wd-local-representation}\\
     F(s)
     &=\frac{\displaystyle
       \int_0^\pi G_d(r)\cos\!\bigl(s(r-\pi/2)\bigr)\dd r}
       {2\cos(\pi s/2)},
       \qquad |s|<1,                                      
       \label{eq:one-period-abel}\\
     c_\nu&=\frac1{2^\nu\sqrt\pi\,\Gamma(\nu+1/2)}.     \notag
    \end{align}
    Consequently, the restriction of \(\widehat W_d\) to \(B_1\) is the regular distribution induced by the real radial function \(w_d(\|\boldsymbol{\xi}\|_2)\), and \(w_d\) is even and real analytic on \((-1,1)\).
    \end{lemma}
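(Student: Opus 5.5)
The argument has four steps. First we check integrability and the distributional limit. Then we compute the radial Fourier transform of the damped window. Then we convert the one-dimensional Abel integral into a single-period formula using anti-periodicity. Finally we pass to the limit uniformly on \(\overline B_K\).

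\emph{Integrability and distributional limit.} Since \(|W_d(\mathbf{x})|\le C\min(1,\|\mathbf{x}\|_2^{-(d-1)})\), we get \(|W_{d,\varepsilon}|\le C\e^{-\varepsilon\|\mathbf{x}\|_2}\min(1,\|\mathbf{x}\|_2^{1-d})\), which lies in \(L^1\). The family \(W_{d,\varepsilon}\) is bounded by \(C\) and converges pointwise to \(W_d\). Dominated convergence against Schwartz test functions therefore gives \(W_{d,\varepsilon}\to W_d\) in \(\mathcal S'\), and hence \(\widehat W_{d,\varepsilon}\to\widehat W_d\) in \(\mathcal S'\).

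\emph{Radial Fourier transform of the damped window.} For the \(L^1\) radial function \(W_{d,\varepsilon}\), the Hankel formula gives
\[
\widehat W_{d,\varepsilon}(\boldsymbol\xi)=\int_0^\infty \e^{-\varepsilon r}\,\frac{G_d(r)}{r^{d-1}}\,(kr)^{-\nu}J_\nu(kr)\,r^{d-1}\dd r,\qquad k=\|\boldsymbol\xi\|_2 .
\]
We then insert Poisson's integral \((z)^{-\nu}J_\nu(z)=c_\nu\int_{-1}^1(1-t^2)^{\nu-1/2}\cos(zt)\dd t\), which is valid since \(\nu\ge0\). The factors \(r^{d-1}\) cancel exactly. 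Fubini is justified because \(\e^{-\varepsilon r}|G_d|\) is integrable and the \(t\)-weight is integrable, so
\[
\widehat W_{d,\varepsilon}(\boldsymbol\xi)=c_\nu\int_{-1}^1(1-t^2)^{\nu-1/2}F_\varepsilon(kt)\dd t .
\]
(For \(m=1\), \(\nu=0\), the weight \((1-t^2)^{-1/2}\) is still integrable.) This reduces the problem to showing \(F_\varepsilon(s)\to F(s)\) uniformly for \(|s|\le K\).

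\emph{Single-period formula for \(F_\varepsilon\).} Since \(d-1\) is odd, \(G_d(r+\pi)=y_d(r)(-\sin r)^{d-1}=-G_d(r)\); note that \(C_2^\lambda(\cos r)\) has period \(\pi\), being even in \(\cos r\). We split \(\int_0^\infty=\sum_{n\ge0}\int_{n\pi}^{(n+1)\pi}\) and substitute \(r=u+n\pi\). Writing \(\cos(s(u+n\pi))=\mathrm{Re}\,\e^{is(u+n\pi)}\), the sum becomes a geometric series:
\[
F_\varepsilon(s)=\mathrm{Re}\int_0^\pi \e^{-\varepsilon u}G_d(u)\e^{isu}\dd u\;\cdot\;\frac{1}{1+\e^{(-\varepsilon+is)\pi}}.
\]
Here \(\mathrm{Re}\) applies to the whole product; since \(G_d\) is real, this is legitimate. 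As \(\varepsilon\to0\), the denominator \(1+\e^{(is-\varepsilon)\pi}\) converges to \(1+\e^{i\pi s}=2\cos(\pi s/2)\e^{i\pi s/2}\), uniformly in \(|s|\le K\). Its modulus stays bounded below by roughly \(2\cos(\pi K/2)>0\) for small \(\varepsilon\). The numerator converges uniformly as well, so the limit is
\[
\mathrm{Re}\,\frac{\int_0^\pi G_d(u)\e^{is(u-\pi/2)}\dd u}{2\cos(\pi s/2)},
\]
which is exactly \eqref{eq:one-period-abel}. The main care in the proof lies here: keeping the lower bound on the denominator uniform, which is precisely why \(K<1\) is needed. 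Integrating against the finite \(t\)-weight then gives \(\widehat W_{d,\varepsilon}\to w_d(\|\boldsymbol\xi\|_2)\) uniformly on \(\overline B_K\).

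\emph{Identification and regularity.} A tempered limit that converges uniformly on compacts of the open ball \(B_1\) coincides there with the distributional limit. Testing against \(\phi\in C_c^\infty(B_1)\) shows that \(\widehat W_d|_{B_1}\) is the regular distribution given by \(w_d(\|\cdot\|_2)\). Finally, \(F\) is even and real analytic on \((-1,1)\): its numerator is an even entire function of \(s\), and the denominator has no zeros there. It follows that \(w_d\) is even and real analytic, for instance by expanding \(F\) in a power series with radius at least \(1\) and integrating termwise.
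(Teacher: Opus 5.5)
Your proposal is correct and follows essentially the same route as the paper. You use dominated convergence for the $\mathcal S'$ limit, the radial Hankel formula combined with Poisson's integral and Fubini, and the anti-periodicity $G_d(r+\pi)=-G_d(r)$ summed as a geometric series whose denominator stays bounded away from zero for $|s|\le K<1$. You then test against $C_c^\infty(B_K)$ to identify $\widehat W_d$ on $B_1$. Your power-series argument for real analyticity is slightly more explicit than the paper's appeal to differentiation under the integral; it works because the zeros of $\cos(\pi s/2)$ are the real odd integers, so the expansion of $F$ at $0$ has radius at least $1$.
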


    \begin{proof}
    The function \(W_d\) is smooth and bounded on \(\mathbb R^d\), and therefore defines a tempered distribution. For every \(\varepsilon>0\), the exponential cutoff makes \(W_{d,\varepsilon}\) integrable, and dominated convergence against Schwartz functions gives \(W_{d,\varepsilon}\to W_d\) in \(\mathcal S'(\mathbb R^d)\) as \(\varepsilon\to 0^{+}\). Continuity of the Fourier transform on \(\mathcal S'\) then gives
    \begin{equation}\label{eq:abel-distributional-convergence}
     \widehat W_{d,\varepsilon}\longrightarrow\widehat W_d
     \quad\text{in }\mathcal S'(\mathbb R^d).
    \end{equation}

    The radial Fourier formula \cite[Chapter~IV, Section~3]{SteinWeiss1971} and Poisson's integral for \(J_\nu\) \cite[Formula~(5.10.3)]{Lebedev1972} give, with absolutely convergent integrals,
    \begin{equation}\label{eq:regularized-transform}
     \begin{aligned}
      \widehat W_{d,\varepsilon}(k\mathbf e_1)
      &=\int_0^\infty
        \frac{J_\nu(kr)}{(kr)^\nu}\e^{-\varepsilon r}G_d(r)\dd r\\
      &=c_\nu\int_{-1}^{1}(1-t^2)^{\nu-1/2}F_\varepsilon(kt)\dd t.
     \end{aligned}
    \end{equation}
    Here the value at \(k=0\) is understood by continuity. Since \(G_d(r+j\pi)=(-1)^jG_d(r)\), splitting the integral into intervals of length \(\pi\) gives, for \(\varepsilon>0\),
    \[
        \begin{aligned}
         \int_0^\infty \e^{-(\varepsilon-is)r}G_d(r)\dd r
         &=\sum_{j=0}^\infty
           \int_{j\pi}^{(j+1)\pi}
           \e^{-(\varepsilon-is)r}G_d(r)\dd r\\
         &=\sum_{j=0}^\infty
           (-1)^j\e^{-(\varepsilon-is)j\pi}
           \int_0^\pi \e^{-(\varepsilon-is)r}G_d(r)\dd r\\
         &=\frac{\displaystyle
           \int_0^\pi \e^{-(\varepsilon-is)r}G_d(r)\dd r}
           {1+\e^{-\pi(\varepsilon-is)}}.
        \end{aligned}
    \]
    The geometric series converges absolutely because \(\e^{-\pi\varepsilon}<1\). On every compact subinterval of \((-1,1)\), the numerator converges uniformly as \(\varepsilon\to0^{+}\), while the denominator stays uniformly away from zero. Taking real parts therefore gives \(F_\varepsilon\to F\) locally uniformly, with \(F\) as in \eqref{eq:one-period-abel}. It follows from \eqref{eq:regularized-transform} that, for \(0<K<1\),
    \[
     \sup_{|k|\leq K}
     \left|\widehat W_{d,\varepsilon}(k\mathbf e_1)-w_d(k)\right|
     \leq c_\nu\left\|(1-t^2)^{\nu-1/2}\right\|_{L^1(-1,1)}
     \sup_{|s|\leq K}|F_\varepsilon(s)-F(s)|
     \longrightarrow0.
    \]
    Finally, if \(\chi\in C_c^\infty(B_K)\), local uniform convergence and \eqref{eq:abel-distributional-convergence} give
    \[
     \langle\widehat W_d,\chi\rangle
     =\lim_{\varepsilon\to0^{+}}
       \int_{\mathbb R^d}\widehat W_{d,\varepsilon}(\boldsymbol{\xi})
       \chi(\boldsymbol{\xi})\dd\boldsymbol{\xi}
     =\int_{\mathbb R^d}w_d(\|\boldsymbol{\xi}\|_2)
       \chi(\boldsymbol{\xi})\dd\boldsymbol{\xi}.
    \]
    Since \(K<1\) is arbitrary and every test function in \(C_c^\infty(B_1)\) is supported in some \(B_K\), this proves that the restriction of \(\widehat W_d\) to \(B_1\) is the regular distribution induced by \(w_d(\|\cdot\|_2)\). Equations \eqref{eq:wd-local-representation} and \eqref{eq:one-period-abel}, with differentiation under the integral on compact subintervals, also show that \(w_d\) is real, even, and analytic there.
    \end{proof}
    
    The lemma resolves the limiting step and provides an ordinary analytic representative of \(\widehat W_d\) near the origin. We now determine the global support and integrability of the transform and compute the first nonzero term of this representative.

    \begin{proposition}[The Fourier window]\label{prop:fourier-window}
    For every even \(d\geq2\), the distribution \(\widehat W_d\) is represented by a real radial function in \(L^1(\mathbb R^d)\), and
    \[
     \supp\widehat W_d\subseteq\overline B_{d+1}.
    \]
    Writing \(\widehat W_d(\boldsymbol{\xi})=w_d(\|\boldsymbol{\xi}\|_2)\) almost everywhere, \(w_d\) has a real even analytic representative on \((-1,1)\), and
    \begin{equation}\label{eq:local-expansion}
     w_d(k)=-\gamma_dk^2+O(k^4)
     \quad(k\to0),
     \qquad
     \gamma_{2m}
     =\frac{2m-1}{2(2m+1)(2m+1)!!}.
    \end{equation}
    In particular, \(w_d(k)<0\) for all sufficiently small nonzero \(k\).
    \end{proposition}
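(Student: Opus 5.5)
The plan has three parts: global support of \(\widehat W_d\) by a Paley--Wiener argument, integrability from an explicit form of the radial transform away from the origin, and the local expansion from the Abel representation of Lemma~\ref{lem:abel-identification}. The starting point is the structure of \(G_d\) in \eqref{eq:Gd}. Since \(C_2^\lambda(\cos r)\) is a trigonometric polynomial of degree \(2\) and \((\sin r)^{d-1}\) is one of degree \(d-1\) containing only odd frequencies (because \(d-1\) is odd), we can write
\[
G_d(r)=\sum_{j\ \mathrm{odd},\ j\le d+1}a_j\sin(jr),
\]
a finite sum of sines with odd frequencies \(j\le d+1\). Hence \(W_d(\mathbf x)=G_d(\|\mathbf x\|_2)/\|\mathbf x\|_2^{d-1}\), where \(G_d(r)/r^{d-1}\) is an even entire function of \(r\), because \(G_d\) vanishes to order \(d-1\) at \(0\). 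Writing it as \(H(r^2)\) with \(H\) entire, \(W_d\) extends to the entire function \(H(\mathbf z\cdot\mathbf z)\) on \(\mathbb C^d\).

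For the support, I will use \(|\operatorname{Im}\sqrt{\mathbf z\cdot\mathbf z}|\le|\operatorname{Im}\mathbf z|\), which gives the bound
\[
|W_d(\mathbf z)|\le C(1+|\mathbf z|)^{N}\e^{(d+1)|\operatorname{Im}\mathbf z|}.
\]
The Paley--Wiener--Schwartz theorem then yields \(\supp\widehat W_d\subseteq\overline B_{d+1}\), since \(W_d\) is bounded on \(\mathbb R^d\) and hence tempered.

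For integrability I will use the radial formula
\[
\widehat W_d(k\mathbf e_1)=\int_0^\infty \frac{J_\nu(kr)}{(kr)^\nu}\,G_d(r)\,dr.
\]
For \(k\) not an odd integer this is a convergent oscillatory integral, by the large-\(r\) asymptotics of \(J_\nu\). Splitting \(G_d\) into its sine terms only at large \(r\), each contribution \(\int^\infty J_\nu(kr)r^{-\nu}\sin(jr)\,dr\) is of discontinuous Weber--Schafheitlin type. These contributions are smooth away from \(k=j\) and have at worst a singularity \(|k-j|^{\nu-1/2}\) (or a logarithm when \(\nu=1/2\) does not occur, since \(\nu\) is an integer) at \(k=j\). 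Since \(\nu\ge0\), such singularities are integrable. Combined with compact support, this gives \(\widehat W_d\in L^1\). It must also be checked that this function coincides with the distribution; this follows by the same Abel limit as in Lemma~\ref{lem:abel-identification}, using dominated convergence away from the finite set of spheres \(\|\boldsymbol\xi\|_2=j\). I expect this step to be the main obstacle, namely controlling the singular behaviour at the spheres \(\|\boldsymbol\xi\|_2=j\) uniformly in \(\varepsilon\). If direct Bessel estimates become messy, I would instead use the even-dimensional descent identity \(\widehat{g}_{d+2}(k)=-k^{-1}\frac{d}{dk}\widehat g_d(k)\) to reduce to \(d=2\), where \(J_0\) integrals against \(\sin(jr)/r\) are explicit: they equal \(\arcsin(j/k)\) or \(\pi/2\).

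For the expansion I will use \eqref{eq:wd-local-representation} and \eqref{eq:one-period-abel}. Set \(N(s)=\int_0^\pi G_d(r)\cos(s(r-\pi/2))\,dr\). Orthogonality of \(C_2^\lambda\) to constants in \(L^2((\sin r)^{d-1}dr)\) gives \(N(0)=0\). By symmetry \(N\) is even, and \(N''(0)=-\int_0^\pi G_d(r)(r-\pi/2)^2\,dr\). Since \(N(0)=0\), we get \(F(s)=\tfrac12N''(0)s^2/2+O(s^4)\), so
\[
w_d(k)=\frac{c_\nu N''(0)}{4}\,k^2\int_{-1}^1(1-t^2)^{\nu-1/2}t^2\,dt+O(k^4).
\]
The sign therefore reduces to showing
\[
I_d:=\int_0^\pi C_2^\lambda(\cos r)(\sin r)^{d-1}\Bigl(r-\frac{\pi}{2}\Bigr)^2dr>0.
\]
To prove this, substitute \(r=\pi/2+u\), write \(C_2^\lambda(\cos r)=2\lambda(\lambda+1)\sin^2u-\lambda\), and integrate \(u^2\cos^{n}u\) over \((-\pi/2,\pi/2)\) using the standard reduction formulas (Wallis type with \(\pi^2\) corrections). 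The \(\pi^2\) parts cancel by orthogonality, leaving a positive rational multiple of the kind in \eqref{eq:local-expansion}. Collecting the constants \(c_\nu\) and the Beta integral then gives \(\gamma_{2m}=\frac{2m-1}{2(2m+1)(2m+1)!!}\). Negativity of \(w_d\) for small \(k\ne0\) is immediate from the expansion.
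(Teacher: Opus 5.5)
The support argument and the reduction of the sign to the centered moment \(\int_0^\pi(r-\pi/2)^2G_d(r)\,\dd r\) via Lemma~\ref{lem:abel-identification} match the paper. The integrability step, however, has a genuine gap, which you flag yourself and leave open.

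Evaluating the Weber--Schafheitlin-type integrals pointwise for \(\|\boldsymbol\xi\|_2\ne j\), and proving Abel convergence away from the spheres \(\|\boldsymbol\xi\|_2=j\), does not show that the distribution \(\widehat W_d\) equals that function. A distributional limit can carry a singular part supported on those spheres, for example a multiple of surface measure. Such a part is invisible to pointwise or locally uniform convergence off the spheres. Excluding it is exactly the uniform-in-\(\varepsilon\) control you do not supply.

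The descent fallback is also misaligned. For the fixed profile \(G_d(r)/r^{d-1}\), the planar transform is \(\int_0^\infty J_0(kr)G_d(r)r^{2-d}\,\dd r\). Its terms are integrals against \(\sin(jr)\,r^{2-d}\), not \(\sin(jr)/r\), and they diverge individually at \(r=0\) once \(d\ge4\). Moreover, the \(k\)-derivative would have to be taken distributionally, so you would again have to rule out jumps at \(k=j\).

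The missing idea is elementary. For \(d\ge4\), \(|W_d(\mathbf x)|\le C_d(1+\|\mathbf x\|_2)^{-(d-1)}\) with \(2(d-1)>d\), so \(W_d\in L^2(\mathbb R^d)\). Plancherel then makes \(\widehat W_d\) a genuine \(L^2\) function. Since it is supported in \(\overline B_{d+1}\), Cauchy--Schwarz gives \(\|\widehat W_d\|_{L^1}\le|B_{d+1}|^{1/2}\|\widehat W_d\|_{L^2}\), with no analysis at the spheres needed. Only \(d=2\) fails this, because the \(L^2\) norm diverges logarithmically. There the paper computes explicitly: \(W_2(r\mathbf e_1)=(3\sin3r-\sin r)/(8r)\), and \(\int_0^\infty J_0(kr)\sin(ar)\,\dd r=\mathbf 1_{\{k<a\}}(a^2-k^2)^{-1/2}\). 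This holds as an identity of tempered distributions because the right-hand side is integrable on \(\mathbb R^2\) and its inverse radial transform is \(\sin(ar)/r\).

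Your evaluation of the moment takes a valid route different from the paper's Gegenbauer-ODE integration by parts. Set \(P_n=\int_{-\pi/2}^{\pi/2}u^2\cos^nu\,\dd u\) and \(I_n=\int_{-\pi/2}^{\pi/2}\cos^nu\,\dd u\). The identity \((\cos^nu)''=n(n-1)\cos^{n-2}u-n^2\cos^nu\) and two integrations by parts give \(n^2P_n=n(n-1)P_{n-2}-2I_n\). Hence your integral equals \(\lambda\bigl(dP_{d-1}-(d+1)P_{d+1}\bigr)=\frac{2\lambda}{d+1}\int_0^\pi(\sin r)^{d+1}\dd r\), which is exactly the paper's \(\mathcal J_d\). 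The \(\pi^2\) terms cancel because of this recursion, not because of orthogonality; orthogonality is the separate identity \(dI_{d-1}=(d+1)I_{d+1}\). The paper's ODE argument reaches the same formula with less bookkeeping.
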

    
    \begin{proof}
    We divide the proof into three parts: compact frequency support, integrability of the Fourier transform, and its local expansion at the origin.

    \emph{Frequency support.}
    The Gegenbauer generating function in \cite{Lebedev1972}, Section~5.12, Formula~(5.12.7), gives
    \[
     C_2^\lambda(x)=-\lambda+2\lambda(\lambda+1)x^2,
    \]
    and hence
    \begin{equation}\label{eq:C2-trigonometric}
     C_2^\lambda(\cos r)
     =\lambda^2+\lambda(\lambda+1)\cos(2r).
    \end{equation}
    Thus the scalar function \(C_2^\lambda(\cos r)(\sin r/r)^{d-1}\) is even and entire. Its power series in \(r^2\) defines an entire extension to \(\mathbb C^d\) as a function of \(z_1^2+\cdots+z_d^2\). If \(\zeta^2=z_1^2+\cdots+z_d^2\), then
    \begin{equation}\label{eq:entire-bound}
     |\operatorname{Im}\zeta|
     \leq\|\operatorname{Im}\mathbf{z}\|_2,
     \qquad
     |W_d(\mathbf{z})|
     \leq C_d(1+\|\mathbf{z}\|_2)^2
           \exp\!\bigl((d+1)\|\operatorname{Im}\mathbf{z}\|_2\bigr).
    \end{equation}
    The two scalar factors have exponential types \(2\) and \(d-1\), respectively. The Paley--Wiener--Schwartz theorem therefore represents \(W_d\) as the Fourier--Laplace transform of a distribution supported in \(\overline B_{d+1}\) \cite[Theorem~7.3.1]{Hormander1990}. Equivalently,
    \begin{equation}\label{eq:window-support}
     \supp\widehat W_d\subseteq\overline B_{d+1}.
    \end{equation}
    
    \emph{Integrability.}
    Suppose first that \(d\geq4\). Since \(C_2^\lambda\) is bounded on
    \([-1,1]\),
    \[
     |W_d(\mathbf{x})|
     \leq C_d(1+\|\mathbf{x}\|_2)^{-(d-1)}.
    \]
    Thus \(W_d\in L^2(\mathbb R^d)\). By Plancherel, \(\widehat W_d\in L^2(\mathbb R^d)\), and the support inclusion \eqref{eq:window-support} and the Cauchy--Schwarz inequality give
    \[
     \|\widehat W_d\|_{L^1}
     \leq |B_{d+1}|^{1/2}\|\widehat W_d\|_{L^2}<\infty.
    \]
    
    For \(d=2\), one has \(\lambda=1/2\) and
    \begin{equation}\label{eq:W2-sine-form}
     W_2(r\mathbf e_1)=\frac{3\sin(3r)-\sin r}{8r}.
    \end{equation}
    With our normalization, the radial Fourier transform in two dimensions is the order-zero Hankel transform. Interpreted by Abel regularization, the classical discontinuous Bessel integral
    \cite[Formula~6.671.7]{GradshteynRyzhik2014} gives, for \(a,k>0\) and
    \(a\ne k\),
    \[
     \int_0^\infty J_0(kr)\sin(ar)\dd r
     =\frac{\mathbf 1_{\{k<a\}}}{\sqrt{a^2-k^2}}.
    \]
    This is also an identity of tempered distributions. Indeed, its right-hand side is integrable on \(\mathbb R^2\), and the inverse radial transform is
    \[
     \int_0^a\frac{kJ_0(kr)}{\sqrt{a^2-k^2}}\dd k
     =\frac{\sin(ar)}r,
    \]
    as follows directly by integrating the power series of \(J_0\) term by term. Thus Lemma~\ref{lem:abel-identification} identifies the Abel limit with this regular distribution. Applying the identity to the two terms in
    \eqref{eq:W2-sine-form} gives
    \begin{equation}\label{eq:W2-transform}
     \widehat W_2(k\mathbf e_1)
     =\frac18\left(
       \frac{3\,\mathbf 1_{[0,3)}(k)}{\sqrt{9-k^2}}
       -\frac{\mathbf 1_{[0,1)}(k)}{\sqrt{1-k^2}}
      \right).
    \end{equation}
    Since
    \[
     \int_0^a\frac{k}{\sqrt{a^2-k^2}}\dd k=a,
    \]
    polar coordinates yield
    \[
     \|\widehat W_2\|_{L^1(\mathbb R^2)}
     \le
     \frac{2\pi}{8}\left(3\int_{0}^{3}\frac{k}{\sqrt{9-k^2}}\dd k
       +\int_{0}^1\frac{k}{\sqrt{1-k^2}}\dd k\right)
     =\frac{5\pi}{2}.
    \]
    
    \emph{The sign at the origin.}
    Lemma~\ref{lem:abel-identification} gives the analytic representative \eqref{eq:wd-local-representation} of \(\widehat W_d\) on \(B_1\). Let
    \begin{equation}\label{eq:Jd-definition}
     \mathcal J_d=\int_0^\pi r^2G_d(r)\dd r.
    \end{equation}
    Gegenbauer orthogonality gives
    \[
     \int_0^\pi G_d(r)\dd r
     =\int_0^{\pi}C_2^\lambda(\cos r)(\sin r)^{d-1}\dd r=0.
    \]
    Moreover, \(G_d(\pi-r)=G_d(r)\), so
    \(\int_0^\pi(r-\pi/2)G_d(r)\dd r=0\). It follows that
    \begin{equation}\label{eq:centered-moment}
     \int_0^\pi (r-\pi/2)^2G_d(r)\dd r
     =\int_0^\pi r^2G_d(r)\dd r
     =\mathcal J_d.
    \end{equation}
    Expanding the numerator in \eqref{eq:one-period-abel} at the origin and using \eqref{eq:centered-moment}, we find
    \[
     \int_0^\pi G_d(r)\cos\!\bigl(s(r-\pi/2)\bigr)\dd r
     =-\frac{\mathcal J_d}{2}s^2+O(s^4).
    \]
    Since \([2\cos(\pi s/2)]^{-1}=1/2+O(s^2)\), this gives
    \begin{equation}\label{eq:abel-low-frequency}
     F(s)=-\frac{\mathcal J_d}{4}s^2+O(s^4)
     \qquad(s\to0).
    \end{equation}
    
    After substituting \(s=kt\), the expansion in \eqref{eq:abel-low-frequency} is uniform for \(-1\leq t\leq1\) when \(k\) is sufficiently small. Since
    \[
     \int_{-1}^{1}t^2(1-t^2)^{\nu-1/2}\dd t
     =\frac{\sqrt\pi\,\Gamma(\nu+1/2)}{2\Gamma(\nu+2)},
    \]
    equations \eqref{eq:wd-local-representation} and \eqref{eq:abel-low-frequency} give
    \begin{equation}\label{eq:wd-low-frequency}
     w_d(k)
     =-\frac{\mathcal J_d}{2^{\nu+3}\Gamma(\nu+2)}k^2+O(k^4).
    \end{equation}
    
    It remains to evaluate \(\mathcal J_d\). After the change of variables \(x=\cos r\), the degree-two Gegenbauer equation becomes
    \[
     \left((\sin r)^{d-1}y_d'(r)\right)'
     +2(d+1)(\sin r)^{d-1}y_d(r)=0.
    \]
    Substituting this identity into \eqref{eq:Jd-definition} and integrating by parts gives
    \[
     \mathcal J_d
     =\frac1{d+1}\int_0^\pi
      r(\sin r)^{d-1}y_d'(r)\dd r.
    \]
    The boundary terms vanish. Since \(y_d'(r)=-4\lambda(\lambda+1)\sin r\cos r\), one further integration by parts yields
    \begin{equation}\label{eq:Jd}
     \mathcal J_d
     =\frac{4\lambda(\lambda+1)}{(d+1)^2}
       \int_0^\pi(\sin r)^{d+1}\dd r
     =\frac{\lambda}{\lambda+1}
      \frac{\sqrt\pi\,\Gamma(\lambda+3/2)}
           {\Gamma(\lambda+2)}
     >0.
    \end{equation}
    Equations \eqref{eq:wd-low-frequency} and \eqref{eq:Jd} prove \eqref{eq:local-expansion}, initially with
    \begin{equation}\label{eq:gamma-intermediate}
     \gamma_d
     =\frac{\mathcal J_d}
            {2^{\nu+3}\Gamma(\nu+2)}.
    \end{equation}
    For \(d=2m\), the half-integer gamma identity
    \[
     \Gamma(m+3/2)
     =\frac{(2m+1)!!}{2^{m+1}}\sqrt\pi
    \]
    reduces \eqref{eq:gamma-intermediate} to
    \[
     \gamma_{2m}
     =\frac{2m-1}{2(2m+1)(2m+1)!!}.
    \]
    Since \(\mathcal J_d>0\), \eqref{eq:wd-low-frequency} shows that \(w_d(k)<0\) whenever \(0<|k|<\delta_d\) for some \(\delta_d>0\). This proves the proposition.
    \end{proof}

    \begin{remark}[Why degree two?]\label{rem:degree-two}
    \upshape
        The choice \(\ell=2\) is dictated by the low-frequency sign. For degree zero, \(C_0^\lambda=1\), so
        \[
         W_{0,d}(\mathbf{x})
         =\left(\frac{\sin r}{r}\right)^{d-1},
         \qquad r=\|\mathbf{x}\|_2.
        \]
        The same Abel argument as in Lemma~\ref{lem:abel-identification} gives an analytic representative of \(\widehat W_{0,d}\) near the origin, with
        \[
         \widehat W_{0,d}(\mathbf{0})
         =\frac{1}{2^{\nu+1}\Gamma(\nu+1)}
           \int_0^\pi(\sin r)^{d-1}\dd r
         >0.
        \]
        Thus the degree-zero window is positive near the origin. The first nonconstant case is degree one. Its window is
        \[
         W_{1,d}(\mathbf{x})
         =C_1^\lambda(\cos r)
          \left(\frac{\sin r}{r}\right)^{d-1},
         \qquad r=\|\mathbf{x}\|_2.
        \]
        Since \(C_1^\lambda(t)=2\lambda t=(d-1)t\),
        \[
         (d-1)\cos r\,(\sin r)^{d-1}
         =\frac{d-1}{d}\frac{\dd}{\dd r}(\sin r)^d.
        \]
        The same Abel argument as in Lemma~\ref{lem:abel-identification} gives an analytic representative of \(\widehat W_{1,d}\) near the origin. Integration by parts, followed by averaging the \(\pi\)-periodic function \((\sin r)^d\), gives
        \[
         \widehat W_{1,d}(\mathbf{0})
         =\frac{d-1}{d\pi\,2^\nu\Gamma(\nu+1)}
           \int_0^\pi(\sin r)^d\dd r
         >0.
        \]
        Thus both lower-degree windows are positive near the origin. By contrast, Proposition~\ref{prop:fourier-window} gives \(w_d(k)=-\gamma_dk^2+O(k^4)\). Consequently, degree two is the first spherical mode whose window has the negative low-frequency sign required by the localization argument.
    \end{remark}
    
    \section{Support-preserving localization and the counterexample}
    \label{sec:construction}
    
    Proposition~\ref{prop:fourier-window} produces a negative spectral band, but a sharp frequency cutoff would destroy compact support in physical space. We instead use a polynomial differential filter whose degree is chosen sufficiently large, depending only on the dimension.

    \subsection{A differential filter with exact support}
    
    Let
    \begin{equation}\label{eq:standard-bump}
     \eta(\mathbf{x})=c_d
     \begin{cases}
      \exp\!\left(-\dfrac{1}{1-\|\mathbf{x}\|_2^2}\right),
          &\|\mathbf{x}\|_2<1,\\
      0,  &\|\mathbf{x}\|_2\geq1,
     \end{cases}
    \end{equation}
    where \(c_d>0\) is chosen so that
    \begin{equation}\label{eq:bump-normalization}
     \widehat\eta(0)=1,
     \qquad\text{equivalently}\qquad
     \int_{\mathbb R^d}\eta(\mathbf{x})\dd\mathbf{x}=(2\pi)^{d/2}.
    \end{equation}
    Then \(\eta\) is nonnegative, real, radial, belongs to \(C_c^\infty(\mathbb R^d)\), is real analytic in \(B_1\), and has support \(\overline B_1\). For \(R>0\), set
    \begin{equation}\label{eq:scaled-bump}
     \eta_R(\mathbf{x})=R^{-d}\eta(\mathbf{x}/R).
    \end{equation}
    For \(M\in\mathbb N\), define
    \begin{equation}\label{eq:filter}
     p_M(s)=\left(1-\frac{s}{(d+1)^2}\right)^M,
     \qquad
     q_{M,R}
     =\left(I+\frac{\Delta}{(d+1)^2}\right)^M\eta_R.
    \end{equation}
    Then
    \begin{equation}\label{eq:filter-transform}
     \widehat q_{M,R}(\boldsymbol{\xi})
     =p_M(\|\boldsymbol{\xi}\|_2^2)
      \widehat\eta(R\boldsymbol{\xi}),
     \qquad
     \widehat q_{M,R}(0)=1.
    \end{equation}
    
    Since differentiation is local,
    \(\supp q_{M,R}\subseteq\overline B_R\). In fact,
    \begin{equation}\label{eq:exact-q-support}
     \supp q_{M,R}=\overline B_R.
    \end{equation}
    Indeed, \(q_{M,R}\) is real analytic in the connected open ball \(B_R\). If it vanished on a nonempty open subset of \(B_R\), the identity theorem for real-analytic functions on connected domains would imply that it vanishes identically in \(B_R\). Since it is continuous and supported in \(\overline B_R\), it would then vanish throughout \(\mathbb R^d\). This contradicts \eqref{eq:filter-transform}. Its nonzero set is therefore dense in \(B_R\), proving \eqref{eq:exact-q-support}.

    \subsection{Concentrating energy in the negative band}
    
    \begin{lemma}[Uniform negative energy]\label{lem:negative-energy}
    For every even \(d\geq2\), there is \(M_d\in\mathbb N\) such that
    \begin{equation}\label{eq:fixed-radius-energy}
     \left\langle\widehat W_d,
      |\widehat q_{M,R}|^2\right\rangle<0
    \end{equation}
    for every \(0<R\leq\pi/2\) and every \(M\geq M_d\).
    \end{lemma}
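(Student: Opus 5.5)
We write the pairing as an integral over the ball, using Proposition~\ref{prop:fourier-window}. Since \(\widehat W_d\in L^1\) is supported in \(\overline B_{d+1}\) and \(|\widehat q_{M,R}|^2\) is continuous, we have
\[
 \left\langle\widehat W_d,|\widehat q_{M,R}|^2\right\rangle
 =\int_{\overline B_{d+1}} w_d(\|\boldsymbol\xi\|_2)\,
  p_M(\|\boldsymbol\xi\|_2^2)^2\,|\widehat\eta(R\boldsymbol\xi)|^2\dd\boldsymbol\xi .
\]
On \(\overline B_{d+1}\) the filter satisfies \(0\le p_M(\|\boldsymbol\xi\|_2^2)=(1-\|\boldsymbol\xi\|_2^2/(d+1)^2)^M\le 1\), with the value \(1\) only at the origin. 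Moreover \(|\widehat\eta(R\boldsymbol\xi)|\le\widehat\eta(0)=1\), because \(\eta\ge0\). So large \(M\) pushes the weight toward the origin, where \(w_d\) is negative. The key difficulty is that \(w_d\) vanishes to second order at \(0\). We therefore have to compare a quadratically small negative contribution near the origin with an exponentially small contribution away from it, and do so uniformly in \(R\).

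Fix \(\delta\in(0,\min(\delta_d,1/2))\) such that \(w_d(k)\le-\tfrac{\gamma_d}{2}k^2\) for \(0<k\le\delta\); this is possible by \eqref{eq:local-expansion}. Next use the uniformity in \(R\le\pi/2\). The function \(\widehat\eta\) is continuous, real and radial with \(\widehat\eta(0)=1\). Hence there is \(\delta'\le\delta\) with \(|\widehat\eta(\boldsymbol\zeta)|\ge1/2\) for \(\|\boldsymbol\zeta\|_2\le(\pi/2)\delta'\). It follows that \(|\widehat\eta(R\boldsymbol\xi)|^2\ge1/4\) for \(\|\boldsymbol\xi\|_2\le\delta'\) and every \(R\le\pi/2\). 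The inner region \(\|\boldsymbol\xi\|_2\le\delta'\) therefore contributes at most
\[
 -\frac{\gamma_d}{8}\,\omega_{d-1}\int_0^{\delta'}k^{d+1}\Bigl(1-\frac{k^2}{(d+1)^2}\Bigr)^{2M}\dd k
 \;\le\; -c\,M^{-(d+2)/2}
\]
for all large \(M\), with \(c>0\) independent of \(R\). To see this, substitute \(k=u/\sqrt M\) and use \((1-u^2/(M(d+1)^2))^{2M}\ge e^{-4u^2/(d+1)^2}\) for \(u\) in a fixed range once \(M\) is large. The annulus \(\delta'<\|\boldsymbol\xi\|_2\le\delta\) contributes a nonpositive amount, since \(w_d\le0\) there, and can be dropped.

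On the outer region \(\delta<\|\boldsymbol\xi\|_2\le d+1\) we bound the integrand by \(|w_d|\,(1-\delta^2/(d+1)^2)^{2M}\). The remaining factor \(|\widehat\eta(R\boldsymbol\xi)|^2\) is at most \(1\) uniformly in \(R\). This region therefore contributes at most \(\|\widehat W_d\|_{L^1}\,\theta^{2M}\), where \(\theta=1-\delta^2/(d+1)^2<1\). The bound uses only the integrability in Proposition~\ref{prop:fourier-window}; no sign information about \(w_d\) is needed away from the origin. Since \(\theta^{2M}\) decays geometrically while \(cM^{-(d+2)/2}\) decays only polynomially, there is \(M_d\) such that \(\|\widehat W_d\|_{L^1}\theta^{2M}<cM^{-(d+2)/2}\) for all \(M\ge M_d\). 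This gives \eqref{eq:fixed-radius-energy} for every \(0<R\le\pi/2\) and every \(M\ge M_d\).

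The main obstacle is the uniformity in \(R\). We handle it by using the upper bound \(|\widehat\eta|\le1\) outside the inner region, and on the inner region the lower bound \(|\widehat\eta|\ge1/2\) on a ball whose radius is fixed by the worst case \(R=\pi/2\). Small \(R\) only makes \(\widehat\eta(R\,\cdot)\) flatter, so it helps. A secondary technical point is justifying the pairing as a Lebesgue integral. This follows because \(\widehat W_d\) is an \(L^1\) function, represented on \(B_1\) by the analytic \(w_d\) of Lemma~\ref{lem:abel-identification}, and \(|\widehat q_{M,R}|^2\) is bounded and continuous on its support.
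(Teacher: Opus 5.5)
Your proposal is correct and follows essentially the same route as the paper. You get a negative inner contribution of order $M^{-(d+2)/2}$, made uniform in $R$ by a lower bound on $|\widehat\eta|$ over a ball fixed by the worst case $R=\pi/2$, and you beat an exponentially small outer tail controlled by $\|\widehat W_d\|_{L^1}$ and $|\widehat\eta|\le 1$. The only cosmetic difference is that you extract the polynomial rate by rescaling $k=u/\sqrt M$ with an exponential lower bound, whereas the paper applies Bernoulli's inequality on the shrinking ball of radius $(d+1)/(2\sqrt M)$.
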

    
    \begin{proof}
    By Proposition~\ref{prop:fourier-window}, the pairing in \eqref{eq:fixed-radius-energy} is the ordinary integral
    \begin{equation}\label{eq:filtered-energy-integral}
     E_{M,R}
     =\int_{\mathbb R^d}w_d(\|\boldsymbol{\xi}\|_2)
       \left(p_M(\|\boldsymbol{\xi}\|_2^2)\right)^2
       |\widehat\eta(R\boldsymbol{\xi})|^2\dd\boldsymbol{\xi}.
    \end{equation}
    By the local expansion and the continuity of \(\widehat\eta\) at the origin, choose \(0<\delta_d<1\) so that
    \begin{equation}\label{eq:negative-band}
     w_d(k)\leq-\frac{\gamma_d}{2}k^2
     \quad(0<k<2\delta_d),
     \qquad
     |\widehat\eta(\boldsymbol{\xi})|^2\geq\frac12
     \quad\left(\|\boldsymbol{\xi}\|_2\leq\frac{\pi\delta_d}{2}\right).
    \end{equation}
    For \(M\) large enough that \(a_M:=(d+1)/(2\sqrt M)<\delta_d\), Bernoulli's inequality gives
    \[
     \left(p_M(k^2)\right)^2
     \geq1-\frac{2Mk^2}{(d+1)^2}\geq\frac12
     \qquad(0\leq k\leq a_M).
    \]
    If \(0<R\leq\pi/2\) and \(0\leq k\leq\delta_d\), then \(Rk\leq\pi\delta_d/2\), so the second inequality in \eqref{eq:negative-band} applies to \(\widehat\eta(Rk\mathbf{e}_1)\). Thus, on \(B_{a_M}\),
    \[
     w_d(k)\left(p_M(k^2)\right)^2|\widehat\eta(R\boldsymbol{\xi})|^2
     \leq-\frac{\gamma_d}{8}k^2.
    \]
    Since \(w_d<0\) on \((0,2\delta_d)\), polar coordinates and \eqref{eq:negative-band} yield, uniformly in \(R\),
    \begin{equation}\label{eq:local-energy-estimate}
     \int_{B_{\delta_d}}w_d(\|\boldsymbol{\xi}\|_2)
      \left(p_M(\|\boldsymbol{\xi}\|_2^2)\right)^2
      |\widehat\eta(R\boldsymbol{\xi})|^2\dd\boldsymbol{\xi}
     \leq-\frac{\omega_{d-1}\gamma_d}{8}
      \int_0^{a_M}k^{d+1}\dd k
     =-C_dM^{-(d+2)/2},
    \end{equation}
    where
    \[
     C_d
     =\frac{\omega_{d-1}\gamma_d}{8(d+2)}
     \left(\frac{d+1}{2}\right)^{d+2}>0.
    \]
    
    The normalization and nonnegativity of \(\eta\) imply
    \(|\widehat\eta|\leq1\). Set
    \[
     L_d=\|\widehat W_d\|_{L^1(\mathbb R^d)},
     \qquad
     \varrho_d=1-\frac{\delta_d^2}{(d+1)^2}<1.
    \]
    On \(\overline B_{d+1}\setminus B_{\delta_d}\), one has \(|p_M(\|\boldsymbol{\xi}\|_2^2)|\leq\varrho_d^M\). Hence
    \begin{equation}\label{eq:tail-L1-estimate}
     \left|\int_{\mathbb R^d\setminus B_{\delta_d}}
       w_d(\|\boldsymbol{\xi}\|_2)
       \left(p_M(\|\boldsymbol{\xi}\|_2^2)\right)^2
       |\widehat\eta(R\boldsymbol{\xi})|^2\dd\boldsymbol{\xi}\right|
     \leq L_d\varrho_d^{2M}.
    \end{equation}
    Combining \eqref{eq:filtered-energy-integral}, \eqref{eq:local-energy-estimate}, and \eqref{eq:tail-L1-estimate}, we obtain
    \[
     E_{M,R}
     \leq -C_dM^{-(d+2)/2}+L_d\varrho_d^{2M}.
    \]
    Since \(0<\varrho_d<1\), the second term decays exponentially and satisfies
    \[
     L_d\varrho_d^{2M}=o\bigl(M^{-(d+2)/2}\bigr)
     \qquad(M\to\infty).
    \]
    Hence \(E_{M,R}<0\) for every sufficiently large \(M\), uniformly in \(R\). This proves the lemma.
    \end{proof}

    \begin{remark}[Support-preserving spectral localization]
    \upshape
    The proof of Lemma~\ref{lem:negative-energy} is a general localization principle. It uses only three properties of the Fourier window: it is integrable, it has compact support, and its first nonzero term at the origin is negative. More generally, if a radial window satisfies
    \[
     w(k)=-c k^{2j}+O(k^{2j+2}),\qquad c>0,
    \]
    the same filter produces a negative local contribution of order \(M^{-(d+2j)/2}\), while the contribution away from the origin remains exponentially small. Thus any such negative low-frequency behavior can be detected by a polynomial in the Laplacian without enlarging physical support; the mechanism is independent of the special Gegenbauer calculation in Proposition~\ref{prop:fourier-window}.
    \end{remark}
    
    \subsection{Construction of the counterexample}

    We now combine the preceding results to construct the required counterexample and prove Theorem~\ref{thm:main}.
    
    \begin{proof}
    Fix the prescribed support radius \(R\in(0,\pi]\), choose any \(M\geq M_d\) as in Lemma~\ref{lem:negative-energy}, and set \(q=q_{M,R/2}\). Equations \eqref{eq:filter}--\eqref{eq:exact-q-support} show that \(q\) is real, radial, smooth, satisfies \(\widehat q(0)=1\), and has
    \[
     \supp q=\overline B_{R/2}.
    \]
    Define \(f=q*\widetilde q=q*q\) and \(\varphi(r)=f(r\mathbf e_1)\) for \(r\geq0\). Since \(\widehat f=(2\pi)^{d/2}|\widehat q|^2\) and \(\widehat q(0)=1\), the spectral density of \(f\) is nonnegative and positive on a neighborhood of the origin. Indeed, for distinct points \(\mathbf x_1,\ldots,\mathbf x_n\) and coefficients \(c_1,\ldots,c_n\), not all zero,
    \[
     \sum_{j,k=1}^n c_j\overline{c_k}f(\mathbf x_j-\mathbf x_k)
     =(2\pi)^{-d/2}\int_{\mathbb R^d}\widehat f(\boldsymbol\xi)
       \left|\sum_{j=1}^n c_j\e^{i\mathbf x_j\cdot\boldsymbol\xi}\right|^2
       \dd\boldsymbol\xi>0,
    \]
    because the exponential polynomial in the integrand is not identically zero and therefore cannot vanish on a nonempty open set. Hence \(f\) is strictly positive definite on \(\mathbb R^d\).

    Lemma~\ref{lem:negative-energy} gives
    \begin{equation}\label{eq:negative-filtered-energy}
     \left\langle\widehat W_d,|\widehat q|^2\right\rangle<0.
    \end{equation}
    Moreover, \(\supp f\subseteq\overline B_R\). The Titchmarsh--Lions theorem of supports for compactly supported distributions \cite[Theorem~4.3.3]{Hormander1990} gives
    \begin{equation}\label{eq:exact-autocorrelation-radius}
     \operatorname{conv}\supp f
     =\operatorname{conv}\supp q
      +\operatorname{conv}\supp\widetilde q
     =\overline B_{R/2}+\overline B_{R/2}
     =\overline B_R.
    \end{equation}
    Since \(f\) is radial, \eqref{eq:exact-autocorrelation-radius} shows that the support radius of \(\varphi\) is exactly \(R\). Since \(R\le\pi\), Lemma~\ref{lem:window} applies. Its Fourier representation and \eqref{eq:negative-filtered-energy} give
    \begin{equation}\label{eq:negative-B2}
     B_{2,d}(\varphi)<0,
    \end{equation}
    and Schoenberg's characterization implies \(\varphi\notin\Psi_d\).

    Finally, the radial extension of \(\varphi\) is \(f\in C_c^\infty(\mathbb R^d)\). The preceding arguments establish all three assertions of Theorem~\ref{thm:main}.
    \end{proof}
    
    \subsection{Consequences and scope of the result}

    We conclude this section by recording several consequences of the construction and clarifying the scope of the result. Together with the odd-dimensional theorem, Theorem~\ref{thm:main} recovers the classification of direct transfer by dimension. The remarks that follow identify where the distinction between odd and even dimensions enters, record the uniformity in the prescribed support radius, and indicate what forms of positive transfer may still remain possible.
    
    \begin{corollary}[Classification by dimension]\label{cor:classification}
    For every integer \(d\geq2\), the direct transfer property \((\mathcal T_d)\) holds if and only if \(d\) is odd.
    \end{corollary}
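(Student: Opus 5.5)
The statement is an "if and only if," so I will prove the two directions separately, using results already available.

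For odd \(d\geq3\), I will simply invoke the odd-dimensional transfer theorem cited in the introduction. It was stated in \cite{NieMa2019} and proved by Feng and Ge \cite{FengGe2022}: whenever \(\varphi\in\Phi_d\) and \(\supp\varphi\subseteq[0,\pi]\), we have \(\varphi|_{[0,\pi]}\in\Psi_d\). This is exactly property \((\mathcal T_d)\). The only thing to check is that the hypotheses there match the definition of \(\Phi_d\) used here, namely continuity together with positive definiteness of \(\varphi(\|\mathbf x-\mathbf y\|_2)\) on \(\mathbb R^d\). This part adds nothing new.

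For even \(d=2m\geq2\), I will use Theorem~\ref{thm:main} with any admissible radius, for instance \(R=\pi\). It produces a \(\varphi\) with three properties.
\begin{enumerate}
\item \(\varphi\) is continuous, since its radial extension lies in \(C_c^\infty(\mathbb R^d)\).
\item \(\supp\varphi\subseteq[0,\pi]\), since the support radius is exactly \(R\le\pi\).
\item \(\varphi\in\Phi_d\), since \(\varphi(\|\mathbf x-\mathbf y\|_2)\) is strictly positive definite, and hence positive definite, on \(\mathbb R^d\).
\end{enumerate}
At the same time, \(\varphi(\rho(\mathbf x,\mathbf y))\) is not positive definite on \(\mathbb S^d\); equivalently, \(B_{2,d}(\varphi)<0\), so by Schoenberg's theorem \(\varphi|_{[0,\pi]}\notin\Psi_d\). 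Thus \(\varphi\) violates the implication in \((\mathcal T_d)\), and the property fails.

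There is essentially no obstacle here. The one point needing care is bookkeeping: \(\Psi_d\) is defined for functions on \([0,\pi]\), and spherical positive definiteness of \(\varphi(\rho)\) depends only on \(\varphi|_{[0,\pi]}\) because \(\rho\in[0,\pi]\). Once that is noted, the corollary follows from the cited odd-dimensional result together with Theorem~\ref{thm:main}.
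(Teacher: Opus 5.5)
Your proposal is correct and matches the paper's proof: the odd direction is the Feng--Ge theorem \cite{FengGe2022}, and the even direction is a counterexample from Theorem~\ref{thm:main} with any radius in \((0,\pi]\). One wording point: \(B_{2,d}(\varphi)<0\) is not equivalent to the failure of spherical positive definiteness. It is a sufficient condition for that failure, and Theorem~\ref{thm:main} supplies it.
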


    \begin{proof}
    The odd-dimensional implication is proved in \cite{FengGe2022}. For every even \(d\), Theorem~\ref{thm:main} supplies a counterexample with any prescribed support radius in \((0,\pi]\).
    \end{proof}

    \begin{remark}[Why even dimensions differ]
    \upshape
        The distinction between odd and even dimensions enters through the analysis of the Fourier window, rather than through the localization argument. Since \(C_2^\lambda\) is even,
        \[
         G_d(r+\pi)=(-1)^{d-1}G_d(r).
        \]
        Moreover, \(G_d(\pi-r)=G_d(r)\). Hence, if
        \[
         A_d(s)=\int_0^\pi
         G_d(r)\cos\!\bigl(s(r-\pi/2)\bigr)\dd r,
        \]
        then \(A_d\) is real and even. For even \(d\), the anti-periodicity of \(G_d\) gives
        \[
         F(s)=\frac{A_d(s)}{2\cos(\pi s/2)}.
        \]
        Gegenbauer orthogonality and the moment calculation in Proposition~\ref{prop:fourier-window} then yield the negative quadratic term at the origin. For odd \(d\), periodicity gives instead
        \[
         \lim_{\varepsilon\downarrow0}
         \int_0^\infty \e^{-(\varepsilon-is)r}G_d(r)\dd r
         =
         \frac{iA_d(s)}{2\sin(\pi s/2)},
         \qquad 0<|s|<1,
        \]
        which is purely imaginary. Thus its cosine Abel sum vanishes near the origin. The dimensional distinction is therefore already encoded in the low-frequency behavior of the corresponding Fourier window; it is not introduced by the subsequent choice of bump or differential filter.
    \end{remark}

    \begin{remark}[Uniformity in the support radius]
    \upshape
    The integer \(M_d\) in Lemma~\ref{lem:negative-energy} is independent of \(R\in(0,\pi/2]\). Hence, in a fixed even dimension, one polynomial in the Laplacian works for the entire scaled bump family. After self-convolution, the support radius is exactly the prescribed value, rather than merely bounded by it.
    \end{remark}

    \begin{remark}[What remains possible]
    \upshape
    Theorem~\ref{thm:main} rules out any even-dimensional transfer theorem based only on Euclidean positive definiteness and a sufficiently small support radius. It does not conflict with positive results for structured subclasses, such as those obtained from P\'olya-type conditions or truncated powers \cite{BeatsonCastellXu2014,Lu2025,Xu2018}. Any positive even-dimensional result must therefore use additional information about the function, or replace direct geodesic substitution by a different distance transformation.
    \end{remark}

    \section*{Acknowledgements}

    The authors used OpenAI's ChatGPT and Codex to assist with English-language editing, preliminary numerical exploration of low-dimensional candidate counterexamples, certain auxiliary computations and integral estimates, and locating references to standard integral identities. The final theorem, construction, and proofs were developed and independently verified by the authors. All auxiliary calculations and integral estimates were checked independently, and all cited formulas and bibliographic information were verified against the original sources. The authors take full responsibility for the mathematical content and presentation of the paper.

	{\small
	\bibliographystyle{amsplain-full}
	\bibliography{references}
}

\end{document}